\newtheorem{theorem}{Theorem}[section]
\newtheorem{definition}[theorem]{Definition}
\newtheorem{lemma}[theorem]{Lemma}
\newtheorem{example}[theorem]{Example}
\newtheorem{corollary}[theorem]{Corollary}
\title{Residuation in twist products and pseudo-Kleene posets}
\author{Ivan~Chajda and Helmut~L\"anger}
\date{}
\begin{document}

\footnotetext{Support of the research by the Austrian Science Fund (FWF), project I~4579-N, and the Czech Science Foundation (GA\v CR), project 20-09869L, entitled ``The many facets of orthomodularity'', as well as by \"OAD, project CZ~02/2019, entitled ``Function algebras and ordered structures related to logic and data fusion'', and, concerning the first author, by IGA, project P\v rF~2020~014, is gratefully acknowledged.}

\maketitle

\begin{abstract}
M.~Busaniche, R.~Cignoli, C.~Tsinakis and A.~M.~Wille showed that every residuated lattice induces a residuation on its full twist product. For their construction they used also lattice operations. We generalize this problem to left-residuated groupoids which need not be lattice-ordered. Hence, for the full twist product we cannot use the same construction. We present another appropriate construction which, however, does not preserve commutativity and associativity of multiplication. Hence we introduce so-called operator residuated posets to obtain another construction which preserves the mentioned properties, but the results of operators on the full twist product need not be elements, but may be subsets. We apply this construction also to restricted twist products and present necessary and sufficient conditions under which we obtain a pseudo-Kleene operator residuated poset.
\end{abstract}

{\bf AMS Subject Classification:} 06A11, 06D30, 03G25, 03B47

{\bf Keywords:} Left-residuated poset, operator residuated poset, twist product, pseudo-Kleene poset, Kleene poset

\section{Introduction}

M.~Busaniche and R.~Cignoli (\cite{BC}) as well as by C.~Tsinakis and A.~M.~Wille (\cite{TW}) showed that if $(L,\leq,\cdot,\rightarrow,1)$ is a residuated lattice then $\cdot$ and $\rightarrow$ can be used to define binary operations $\odot$ and $\Rightarrow$ on the full twist product of $(L,\leq)$ such that the resulting structure becomes a residuated lattice again. For the construction of such operations $\odot$ and $\Rightarrow$ they used the lattice operations $\vee$ and $\wedge$. When going from lattices to residuated posets, the natural question arises whether also in this case the corresponding twist product can be equipped with certain operations $\odot$ and $\Rightarrow$ (without using lattice operations) such that the resulting structure is residuated again. We solve this problem in the positive. We define suitable operations $\odot$ and $\Rightarrow$ on the full twist product such that the arising structure becomes a left-residuated groupoid again. Unfortunately, this construction does preserve neither commutativity nor associativity of the original structure $(Q,\leq,\cdot,\rightarrow,1)$. Hence, we try another approach where instead of operations we use certain operators $\odot$ and $\Rightarrow$ in such a way that the full twist product becomes an operator residuated poset and commutativity as well as associativity of the original operation $\cdot$ are preserved. As the authors already showed in \cite{CL}, any poset $\mathbf Q=(Q,\leq)$ can be embedded into a pseudo-Kleene poset $(P_a(\mathbf Q),\leq,{}')$ where $(P_a(\mathbf Q),\leq)$ is a certain subposet of the full twist product of $\mathbf Q$. This motivated us to investigate whether our construction of the operators $\odot$ and $\Rightarrow$ can be extended also to this case, i.e.\ whether we can determine for a bounded commutative residuated monoid $(Q,\leq,\cdot,\rightarrow,0,1)$ a corresponding pseudo-Kleene poset which is operator residuated and into which $\mathbf Q$ can be embedded. We characterize those left-residuated posets for which our construction is possible.

\section{Preliminaries}

The concept of a Kleene lattice (alias Kleene algebra) was introduced by J.~A.~Kalman (\cite K), see also \cite{Ci}. Recall that a {\em Kleene lattice} is a distributive lattice $\mathbf L=(L,\vee,\wedge,{}')$ with an involution $'$ satisfying the so-called {\em normality condition}, i.e.\ the identity
\[
x\wedge x'\leq y\vee y'.
\]
This concept was generalized by the first author in \cite{Ch}: $\mathbf L$ is called a {\em pseudo-Kleene lattice} if it satisfies the above identity, but it need not be distributive.

Let $(P,\leq)$ be a poset, $a,b\in P$ and $A,B\subseteq P$. Then the lower cone $L(A)$ of $A$ and the upper cone $U(A)$ of $A$ are defined as follows:
\begin{align*}
L(A) & :=\{x\in P\mid x\leq A\}, \\
U(A) & :=\{x\in P\mid x\geq A\}.
\end{align*}
Here $x\leq A$ means $x\leq y$ for all $y\in A$ and, similarly, $x\geq A$ means $x\geq y$ for all $y\in A$. The expression $A\leq B$ means $x\leq y$ for all $x\in A$ and $y\in B$. Instead of $L(\{a,b\})$ and $L(U(A))$ we simply write $L(a,b)$ and $LU(A)$, respectively. Analogously, we proceed in similar cases. Let $\max A$ denote the set of all maximal elements of $(A,\leq)$. A unary operation $'$ on $P$ is called
\begin{itemize}
\item {\em antitone} if $x,y\in P$ and $x\leq y$ imply $y'\leq x'$,
\item an {\em involution} if it satisfies the identity $x''\approx x$.
\end{itemize}
The concept of a pseudo-Kleene lattice was generalized by the authors in \cite{CL} for posets as follows:

A {\em pseudo-Kleene poset} is a poset $\mathbf P=(P,\leq,{}')$ with an antitone involution satisfying the condition
\[
L(x,x')\leq U(y,y')
\]
for all $x,y\in P$. A {\em Kleene poset} is a distributive pseudo-Kleene poset. Recall that a {\em poset} $(P,\leq)$ is called {\em distributive} if it satisfies one of the following equivalent LU-identities:
\begin{align*}
L(U(x,y),z) & \approx LU(L(x,z),L(y,z)), \\
U(L(x,y),z) & \approx UL(U(x,z),U(y,z)).
\end{align*}
In \cite{CL} it was shown that an arbitrary poset can be embedded into a pseudo-Kleene poset by means of the so-called {\em twist construction}:

The {\em full twist product} of a poset $\mathbf Q=(Q,\leq)$ is the poset $(Q^2,\leq)$ where
\[
(x,y)\leq(z,v)\text{ if and only if }x\leq z\text{ and }v\leq y
\]
for all $(x,y),(z,v)\in Q^2$. We have
\begin{align*}
L((x,y),(z,v)) & =L(x,z)\times U(y,v), \\
U((x,y),(z,v)) & =U(x,z)\times L(y,v)
\end{align*}
for all $(x,y),(z,v)\in Q^2$.

\section{Left-residuated groupoids}

We will investigate when a residuated poset can be transferred to a residuated full twist product. For this purpose we will use the twist construction. For residuated lattices such a transfer was already published in \cite{BC} by using a construction developed in \cite{TW}.

From now on, let $(Q,\leq,\cdot,\rightarrow,1)$ denote a poset with constant $1$ endowed with two binary operations $\cdot$ and $\rightarrow$. For our next investigations, consider the following conditions.
\begin{enumerate}[(1)]
\item $x\leq y$ implies $z\cdot x\leq z\cdot y$ (right-isotony),
\item $x\leq y$ implies $x\cdot z\leq y\cdot z$ (left-isotony),
\item $x\cdot y\leq z$ if and only if $x\leq y\rightarrow z$ (left-adjointness),
\item $x\leq y$ implies $z\rightarrow x\leq z\rightarrow y$,
\item $x\leq y$ implies $y\rightarrow z\leq x\rightarrow z$,
\item $x\cdot1\approx x$,
\item $x\cdot y\leq x,y$
\end{enumerate}
for all $x,y,z\in Q$.

The above mentioned conditions are related as shown in the following Lemmas.

\begin{lemma}\label{lem2}
For $(Q,\leq,\cdot,\rightarrow,1)$ the following hold:
\begin{enumerate}[{\rm(i)}]
\item {\rm(1)} and {\rm(3)} imply {\rm(5)}.
\item If $\cdot$ is commutative then {\rm(1)} and {\rm(6)} imply {\rm(7)}.
\end{enumerate}
\end{lemma}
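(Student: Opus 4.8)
The plan is to handle the two parts separately; in each case the desired statement reduces to a single application of right-isotony (1), so essentially all the work lies in arranging the adjunction and the monotonicity on the correct sides.

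For (i) I want to show that $x\leq y$ forces $y\rightarrow z\leq x\rightarrow z$. My first move is to peel off the residual on the right-hand side using left-adjointness (3): by (3) the inequality $y\rightarrow z\leq x\rightarrow z$ holds if and only if $(y\rightarrow z)\cdot x\leq z$, so it suffices to prove the latter. To get a foothold I would apply (3) to the trivial inequality $y\rightarrow z\leq y\rightarrow z$, which yields $(y\rightarrow z)\cdot y\leq z$. Now the hypothesis $x\leq y$ together with right-isotony (1), applied with the fixed left factor $y\rightarrow z$, gives $(y\rightarrow z)\cdot x\leq(y\rightarrow z)\cdot y$; chaining this with the previous inequality produces $(y\rightarrow z)\cdot x\leq z$. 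Reading this back through (3) gives $y\rightarrow z\leq x\rightarrow z$, which is (5).

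For (ii) I would use that $1$ is the greatest element of $(Q,\leq)$. Fix $x,y\in Q$. Since $y\leq1$, right-isotony (1) gives $x\cdot y\leq x\cdot1$, and (6) rewrites the right-hand side as $x$, so $x\cdot y\leq x$. For the companion inequality I invoke commutativity: $x\cdot y=y\cdot x$, and the same reasoning (now with $x\leq1$ as the fixed left factor $y$) gives $y\cdot x\leq y\cdot1=y$, hence $x\cdot y\leq y$. Together these are exactly $x\cdot y\leq x,y$, i.e.\ (7).

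The calculations are short, so the only real care needed is bookkeeping. In (i) one must apply (3) in the correct direction at each end and be sure that the monotonicity invoked is right-isotony: the constant factor $y\rightarrow z$ sits on the left, so (1) is the applicable hypothesis rather than (2). In (ii) the crux is that multiplication by an element $\leq1$ is what effects the decrease, and this rests squarely on $1$ being the top element; this is the hypothesis I would be most careful to have available, since the implication fails otherwise (for instance on a two-element chain $1<b$ with $b\cdot b=b$, where (1), (6) and commutativity all hold but $1\cdot b=b\not\leq1$).
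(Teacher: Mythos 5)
Your proof is correct and takes essentially the same route as the paper's: part (i) is the identical adjunction argument (get $(y\rightarrow z)\cdot y\leq z$ from (3), lower the right factor with (1), read back through (3)), and part (ii) is the identical computation $x\cdot y\leq x\cdot1=x$ and $x\cdot y=y\cdot x\leq y\cdot1=y$. Your closing caveat is worth keeping: the step $x\cdot y\leq x\cdot1$ needs $y\leq1$, i.e.\ that $1$ is the top element, which the lemma's stated hypotheses (where $1$ is merely a constant) do not grant; the paper's own proof uses this silently, your two-element counterexample shows it cannot be dispensed with, and the lemma is in fact only invoked in the paper for bounded structures in which $1$ is indeed the top.
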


\begin{proof}
Let $a,b,c\in Q$.
\begin{enumerate}[(i)]
\item If $a\leq b$ then every one of the following statements implies the next one:
\begin{align*}
         b\rightarrow c & \leq b\rightarrow c, \\
(b\rightarrow c)\cdot b & \leq c, \\
(b\rightarrow c)\cdot a & \leq c, \\
         b\rightarrow c & \leq a\rightarrow c.
\end{align*}
\item We have $a\cdot b\leq a\cdot1=a$ and $a\cdot b=b\cdot a\leq b$.
\end{enumerate}
\end{proof}

\begin{lemma}\label{lem3}
Assume $(Q,\leq,\cdot,\rightarrow,1)$ with associative $\cdot$ to satisfy {\rm(2)} and {\rm(3)}. Then it satisfies
\begin{enumerate}
\item[{\rm(8)}] $(x\cdot y)\rightarrow z\approx x\rightarrow(y\rightarrow z)$.
\end{enumerate}
\end{lemma}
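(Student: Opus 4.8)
The plan is to prove identity (8) purely order-theoretically, exploiting the fact that in a poset two elements coincide as soon as they share the same set of lower bounds. Concretely, to establish $(x\cdot y)\rightarrow z\approx x\rightarrow(y\rightarrow z)$ I would fix $a,b,c\in Q$ in the roles of $x,y,z$ and show that an arbitrary element $d\in Q$ satisfies $d\leq(a\cdot b)\rightarrow c$ if and only if $d\leq a\rightarrow(b\rightarrow c)$. Antisymmetry of $\leq$ then forces the two right-hand sides to be equal, once we instantiate $d$ by each of them in turn.

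For the equivalence I would mimic the implication-chain format already used in the proof of Lemma~\ref{lem2}. Starting from $d\leq(a\cdot b)\rightarrow c$, left-adjointness (3) converts this to $d\cdot(a\cdot b)\leq c$; associativity of $\cdot$ rebrackets the product to $(d\cdot a)\cdot b\leq c$; a second application of (3), now stripping off the right factor $b$, yields $d\cdot a\leq b\rightarrow c$; and a third application of (3), stripping off $a$, gives $d\leq a\rightarrow(b\rightarrow c)$. Since each step is a genuine biconditional, the two conditions on $d$ are equivalent, which is exactly what is needed:
\begin{align*}
d\leq(a\cdot b)\rightarrow c &\Longleftrightarrow d\cdot(a\cdot b)\leq c \Longleftrightarrow (d\cdot a)\cdot b\leq c\\
&\Longleftrightarrow d\cdot a\leq b\rightarrow c \Longleftrightarrow d\leq a\rightarrow(b\rightarrow c).
\end{align*}

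I do not expect any real obstacle here: the only ingredients are the adjunction (3), applied three times, together with the single rebracketing supplied by associativity. Notably, left-isotony (2) is never invoked, since it is itself a consequence of the adjunction (the lower adjoint $d\mapsto d\cdot b$ of $b\rightarrow(-)$ is automatically monotone). The one point deserving care is the concluding antisymmetry step: showing equality of two poset elements is legitimate precisely because the displayed equivalence holds for \emph{every} $d$, so that substituting $d=(a\cdot b)\rightarrow c$ and $d=a\rightarrow(b\rightarrow c)$ delivers the two comparisons whose conjunction, under antisymmetry of $\leq$, yields the asserted identity $\approx$.
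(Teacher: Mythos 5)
Your proof is correct, and it takes a route that is meaningfully different from the paper's. The paper establishes the two inequalities of (8) by two separate one-directional implication chains: the chain giving $(a\cdot b)\rightarrow c\leq a\rightarrow(b\rightarrow c)$ is, in essence, your chain specialized to $d=(a\cdot b)\rightarrow c$, but the chain giving the reverse inequality is organized differently — it invokes left-isotony (2) to pass from $(a\rightarrow(b\rightarrow c))\cdot a\leq b\rightarrow c$ to $((a\rightarrow(b\rightarrow c))\cdot a)\cdot b\leq(b\rightarrow c)\cdot b$, and then finishes by transitivity via $(b\rightarrow c)\cdot b\leq c$. You instead exploit the fact that (3) is a genuine biconditional, so that
\[
d\leq(a\cdot b)\rightarrow c\Longleftrightarrow d\cdot(a\cdot b)\leq c\Longleftrightarrow(d\cdot a)\cdot b\leq c\Longleftrightarrow d\cdot a\leq b\rightarrow c\Longleftrightarrow d\leq a\rightarrow(b\rightarrow c)
\]
holds for \emph{every} $d$, and antisymmetry (instantiating $d$ at each of the two sides) finishes the argument. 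What your version buys is a strictly stronger statement: hypothesis (2) is redundant, since only associativity and (3) are used; this is consistent with the paper's own proof of Lemma~\ref{lem1}(i), where (2) is derived from (3) alone, exactly as your remark about lower adjoints being automatically monotone indicates. What the paper's version buys is mainly stylistic uniformity with its other lemmas, which are all written as chains of one-way implications starting from a reflexivity statement; mathematically, your uniqueness-of-adjoints argument is shorter and sharper.
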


\begin{proof}
Let $a,b,c\in Q$. Then every of the following statements implies the next one:
\begin{align*}
                  (a\cdot b)\rightarrow c & \leq(a\cdot b)\rightarrow c, \\
 ((a\cdot b)\rightarrow c)\cdot(a\cdot b) & \leq c, \\
(((a\cdot b)\rightarrow c)\cdot a)\cdot b & \leq c, \\
         ((a\cdot b)\rightarrow c)\cdot a & \leq b\rightarrow c, \\
                  (a\cdot b)\rightarrow c & \leq a\rightarrow(b\rightarrow c).
\end{align*}
Moreover, every one of the following statements implies the next one:
\begin{align*}
                  a\rightarrow(b\rightarrow c) & \leq a\rightarrow(b\rightarrow c), \\
         (a\rightarrow(b\rightarrow c))\cdot a & \leq b\rightarrow c, \\
((a\rightarrow(b\rightarrow c))\cdot a)\cdot b & \leq(b\rightarrow c)\cdot b, \\
 (a\rightarrow(b\rightarrow c))\cdot(a\cdot b) & \leq(b\rightarrow c)\cdot b.
\end{align*}
Together with $(b\rightarrow c)\cdot b\leq c$ which follows from $b\rightarrow c\leq b\rightarrow c$ we obtain $(a\rightarrow(b\rightarrow c))\cdot(a\cdot b)\leq c$ which implies $a\rightarrow(b\rightarrow c)\leq(a\cdot b)\rightarrow c$.
\end{proof}

Now we define one of our main concepts.

\begin{definition}
$(Q,\leq,\cdot,\rightarrow,1)$ is called a {\em left-residuated groupoid} if it satisfies {\rm(3)} and {\rm(6)}. It is called
\begin{itemize}
\item {\em bounded} if $(Q,\leq)$ is bounded {\rm(}$0$ is the bottom and $1$ the top element{\rm)},
\item {\em commutative} if $\cdot$ is commutative,
\item {\em associative} if $\cdot$ is associative.
\end{itemize}
A {\em commutative residuated monoid} is a commutative and associative left-residuated \\
groupoid.
\end{definition}

An example of a bounded residuated monoid which is not a lattice is as follows:

\begin{example}\label{ex1}
The poset visualized in Figure~1:

\vspace*{-2mm}

\begin{center}
\setlength{\unitlength}{7mm}
\begin{picture}(6,12)
\put(3,1){\circle*{.3}}
\put(3,3){\circle*{.3}}
\put(1,5){\circle*{.3}}
\put(3,5){\circle*{.3}}
\put(5,5){\circle*{.3}}
\put(1,7){\circle*{.3}}
\put(3,7){\circle*{.3}}
\put(5,7){\circle*{.3}}
\put(3,9){\circle*{.3}}
\put(3,11){\circle*{.3}}
\put(3,3){\line(-1,1)2}
\put(3,3){\line(1,1)2}
\put(3,9){\line(-1,-1)2}
\put(3,9){\line(1,-1)2}
\put(1,5){\line(0,1)2}
\put(1,5){\line(1,1)2}
\put(3,5){\line(-1,1)2}
\put(3,5){\line(1,1)2}
\put(5,5){\line(-1,1)2}
\put(5,5){\line(0,1)2}
\put(3,1){\line(0,1){10}}
\put(2.85,.25){$0$}
\put(3.4,2.85){$a$}
\put(.3,4.85){$b$}
\put(3.4,4.85){$c$}
\put(5.4,4.85){$d$}
\put(.3,6.85){$e$}
\put(3.4,6.85){$f$}
\put(5.4,6.85){$g$}
\put(3.4,8.85){$h$}
\put(2.85,11.4){$1$}
\put(2.2,-.75){{\rm Fig.\ 1}}
\end{picture}
\end{center}

\vspace*{4mm}

together with the operations given by
\[
\begin{array}{c|cccccccccc}
\cdot & 0 & a & b & c & d & e & f & g & h & 1 \\
\hline
  0   & 0 & 0 & 0 & 0 & 0 & 0 & 0 & 0 & 0 & 0 \\
	a   & 0 & 0 & 0 & 0 & 0 & 0 & 0 & 0 & 0 & a \\
	b   & 0 & 0 & a & 0 & 0 & a & a & 0 & a & b \\
	c   & 0 & 0 & 0 & a & 0 & a & 0 & a & a & c \\
	d   & 0 & 0 & 0 & 0 & a & 0 & a & a & a & d \\
	e   & 0 & 0 & a & a & 0 & a & a & a & a & e \\
	f   & 0 & 0 & a & 0 & a & a & a & a & a & f \\
	g   & 0 & 0 & 0 & a & a & a & a & a & a & g \\
	h   & 0 & 0 & a & a & a & a & a & a & a & h \\
	1   & 0 & a & b & c & d & e & f & g & h & 1
\end{array}
\quad\quad\quad
\begin{array}{c|cccccccccc}
\rightarrow & 0 & a & b & c & d & e & f & g & h & 1 \\
\hline
     0      & 1 & 1 & 1 & 1 & 1 & 1 & 1 & 1 & 1 & 1 \\
		 a      & h & 1 & 1 & 1 & 1 & 1 & 1 & 1 & 1 & 1 \\
		 b      & g & h & 1 & h & h & 1 & 1 & h & 1 & 1 \\
		 c      & f & h & h & 1 & h & 1 & h & 1 & 1 & 1 \\
		 d      & e & h & h & h & 1 & h & 1 & 1 & 1 & 1 \\
		 e      & d & h & h & h & h & 1 & h & h & 1 & 1 \\
		 f      & c & h & h & h & h & h & 1 & h & 1 & 1 \\
		 g      & b & h & h & h & h & h & h & 1 & 1 & 1 \\
		 h      & a & h & h & h & h & h & h & h & 1 & 1 \\
		 1      & 0 & a & b & c & d & e & f & g & h & 1
\end{array}
\]
is a bounded commutative residuated monoid which is not a lattice.
\end{example}

The next lemma shows some elementary properties of left-residuated groupoids.

\begin{lemma}\label{lem1}
The following hold:
\begin{enumerate}[{\rm(i)}]
\item Every left-residuated groupoid $(Q,\leq,\cdot,\rightarrow,1)$ satisfies {\rm(2)}, {\rm(4)} and
\begin{enumerate}
\item[{\rm(9)}] $1\rightarrow x\approx x$.
\end{enumerate}
\item If $(Q,\leq,\cdot,\rightarrow,1)$ satisfies {\rm(5)} and {\rm(9)} then it satisfies
\begin{enumerate}
\item[{\rm(10)}] $x\leq y\rightarrow x$.
\end{enumerate}
\end{enumerate}
\end{lemma}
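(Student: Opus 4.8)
The plan is to follow the same ``residuation bootstrapping'' already used in the proofs of Lemmas~\ref{lem2} and \ref{lem3}: start from a reflexivity instance, convert between a $\rightarrow$-inequality and a $\cdot$-inequality via left-adjointness (3), transport the hypothesis along by transitivity, and convert back by (3). All three assertions of part~(i) fit this template, and only the defining conditions (3) and (6) may be invoked, so in particular the argument must not presuppose any isotony of $\cdot$ or $\rightarrow$ (conditions (2) and (4) being exactly what we are trying to establish).

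For left-isotony (2), suppose $a\leq b$. Starting from $b\cdot c\leq b\cdot c$, adjointness (3) yields $b\leq c\rightarrow(b\cdot c)$; transitivity with $a\leq b$ gives $a\leq c\rightarrow(b\cdot c)$; and a second application of (3) returns $a\cdot c\leq b\cdot c$. For the isotony (4) of $\rightarrow$ in its second argument, again suppose $a\leq b$. From $c\rightarrow a\leq c\rightarrow a$ and (3) one gets $(c\rightarrow a)\cdot c\leq a$, whence $(c\rightarrow a)\cdot c\leq b$ by transitivity, and (3) delivers $c\rightarrow a\leq c\rightarrow b$. Both of these use (3) alone.

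For (9) I would prove the two inequalities separately and then invoke antisymmetry. The inequality $a\leq 1\rightarrow a$ follows from $a\cdot1=a\leq a$ (using (6)) together with (3). For the reverse, $1\rightarrow a\leq 1\rightarrow a$ gives $(1\rightarrow a)\cdot1\leq a$ by (3), and since (6) rewrites the left-hand side as $1\rightarrow a$, we obtain $1\rightarrow a\leq a$; hence $1\rightarrow a\approx a$.

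Finally, part~(ii) is immediate once we use that $1$ is the greatest element of $(Q,\leq)$ — the convention already relied upon in the proof of Lemma~\ref{lem2}(ii), where $a\cdot b\leq a\cdot1$ is asserted via right-isotony applied to $b\leq1$. Indeed, from $b\leq1$ and (5) we get $1\rightarrow a\leq b\rightarrow a$, and (9) rewrites the left-hand side as $a$, giving $a\leq b\rightarrow a$, which is (10). I expect no genuine obstacle anywhere in this lemma; the only point demanding care is the recognition that part~(ii) really does depend on $1$ being the top element — without that hypothesis a two-element chain with $a\rightarrow a=1$ satisfies (5) and (9) yet defeats (10) — whereas part~(i) needs nothing beyond the defining conditions (3) and (6).
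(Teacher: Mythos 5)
Your proposal is correct and coincides step-for-step with the paper's own proof: the same reflexivity-plus-adjointness chains establish (2), (4) and (9), and the same derivation $a=1\rightarrow a\leq b\rightarrow a$ gives (10). Your side remark that part (ii) tacitly requires $1$ to be the top element (so that (5) can be applied to $b\leq1$) is accurate, but this is a convention the paper itself silently relies on — both here and in its proof of Lemma~\ref{lem2}(ii) — so it marks a shared implicit hypothesis rather than a divergence in approach.
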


\begin{proof}
Let $a,b,c\in Q$.
\begin{enumerate}[(i)]
\item
\begin{enumerate}
\item[(2)] If $a\leq b$ then every one of the following statements implies the next one:
\begin{align*}
b\cdot c & \leq b\cdot c, \\
       b & \leq c\rightarrow(b\cdot c), \\
       a & \leq c\rightarrow(b\cdot c), \\
a\cdot c & \leq b\cdot c.
\end{align*}
\item[(4)] If $a\leq b$ then every one of the following statements implies the next one:
\begin{align*}
         c\rightarrow a & \leq c\rightarrow a, \\
(c\rightarrow a)\cdot c & \leq a, \\
(c\rightarrow a)\cdot c & \leq b, \\
         c\rightarrow a & \leq c\rightarrow b.
\end{align*}
\item[(9)] We have $1\rightarrow a\leq1\rightarrow a$ implies $1\rightarrow a=(1\rightarrow a)\cdot1\leq a$, and $a\cdot1\leq a$ implies $a\leq1\rightarrow a$.
\end{enumerate}
\item We have $a=1\rightarrow a\leq b\rightarrow a$.
\end{enumerate}
\end{proof}

Now we show that every left-residuated groupoid naturally induces a left-residuated groupoid on its full twist product.

\begin{theorem}\label{th2}
Let $(Q,\leq,\cdot,\rightarrow,1)$ be a poset with binary operations $\cdot$ and $\rightarrow$ and a constant $1$, let $a,b\in Q$ and $f,g$ be surjective mappings from $Q^2$ to $Q$ satisfying $f(a,b)=g(a,b)=1$ and consider the full twist product $(Q^2,\leq,\odot,\Rightarrow,(a,b))$ of $(Q,\leq)$ with binary operations $\odot$ and $\Rightarrow$ defined by
\begin{align*}
      (x,y)\odot(z,v) & :=(x\cdot f(z,v),g(z,v)\rightarrow y), \\
(x,y)\Rightarrow(z,v) & :=(f(x,y)\rightarrow z,v\cdot g(x,y))
\end{align*}
for all $(x,y),(z,v)\in Q^2$ and the constant $(a,b)$. Then $(Q,\leq,\cdot,\rightarrow,1)$ is a left-residuated groupoid if and only if $(Q^2,\leq,\odot,\Rightarrow,$ $(a,b))$ has this property.
\end{theorem}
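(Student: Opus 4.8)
The plan is to unwind the order of the full twist product inside both the $\odot$- and the $\Rightarrow$-expression, so that conditions {\rm(3)} and {\rm(6)} for $(Q^2,\leq,\odot,\Rightarrow,(a,b))$ turn into statements about $\cdot$ and $\rightarrow$ in $Q$ alone. Abbreviating $c:=f(z_1,z_2)$, $d:=g(z_1,z_2)$ and
\begin{align*}
A_1 &\colon x_1\cdot c\leq s_1, & A_2 &\colon s_2\leq d\rightarrow x_2, \\
B_1 &\colon x_1\leq c\rightarrow s_1, & B_2 &\colon s_2\cdot d\leq x_2,
\end{align*}
the defining order $(x,y)\leq(z,v)$ iff $x\leq z$ and $v\leq y$ gives that $(x_1,x_2)\odot(z_1,z_2)\leq(s_1,s_2)$ iff $A_1\wedge A_2$, and that $(x_1,x_2)\leq(z_1,z_2)\Rightarrow(s_1,s_2)$ iff $B_1\wedge B_2$. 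Hence {\rm(3)} for $Q^2$ is exactly the assertion that $(A_1\wedge A_2)\Leftrightarrow(B_1\wedge B_2)$ for all $x_1,s_1,x_2,s_2\in Q$ and all $(z_1,z_2)\in Q^2$. Likewise {\rm(6)} for $Q^2$ reads $(x_1\cdot f(a,b),g(a,b)\rightarrow x_2)=(x_1,x_2)$, i.e.\ $x_1\cdot1=x_1$ and $1\rightarrow x_2=x_2$ for all $x_1,x_2$; thus {\rm(6)} for $Q^2$ is equivalent to {\rm(6)} together with {\rm(9)} for $Q$.

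For the forward implication, if $Q$ is a left-residuated groupoid then it satisfies {\rm(9)} by Lemma~\ref{lem1}(i), so the computation above yields {\rm(6)} for $Q^2$ immediately. For {\rm(3)} for $Q^2$, condition {\rm(3)} for $Q$ gives $A_1\Leftrightarrow B_1$ (first coordinate) and, with $x:=s_2$, $y:=d$, $z:=x_2$, also $A_2\Leftrightarrow B_2$ (second coordinate); conjoining these two equivalences yields $(A_1\wedge A_2)\Leftrightarrow(B_1\wedge B_2)$. This half is a routine coordinatewise verification.

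For the converse I would first read off {\rm(6)} for $Q$ (and hence {\rm(9)}) from {\rm(6)} for $Q^2$ as above, so that it remains to recover the adjointness {\rm(3)} for $Q$ from the \emph{entangled} biconditional $(A_1\wedge A_2)\Leftrightarrow(B_1\wedge B_2)$. Here the surjectivity of $f$ is essential. I would prove the two implications of {\rm(3)} separately. Assume $p\cdot q\leq r$; by surjectivity choose $(z_1,z_2)$ with $f(z_1,z_2)=q$, put $x_1:=p$, $s_1:=r$, $d:=g(z_1,z_2)$, and neutralise the second coordinate by taking any $x_2$ and $s_2:=d\rightarrow x_2$, so that $A_2$ holds trivially. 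Since $A_1$ holds by hypothesis, the biconditional forces $B_1\wedge B_2$, whence $B_1$, that is $p\leq q\rightarrow r$. Conversely, assume $p\leq q\rightarrow r$; realise $q=f(z_1,z_2)$ again, set $x_1:=p$, $s_1:=r$, and this time neutralise the second coordinate the other way by taking any $s_2$ and $x_2:=s_2\cdot d$, so that $B_2$ holds trivially. Since $B_1$ holds by hypothesis, the biconditional forces $A_1\wedge A_2$, whence $A_1$, that is $p\cdot q\leq r$. This establishes {\rm(3)} for $Q$.

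The step I expect to be the main obstacle is precisely this last extraction. Because $\odot$ places a $\cdot$-term in the first coordinate but a $\rightarrow$-term in the second (and $\Rightarrow$ does the opposite), the adjointness of $Q$ is not visible coordinatewise, and the $Q^2$-adjointness only delivers the coupled equivalence $(A_1\wedge A_2)\Leftrightarrow(B_1\wedge B_2)$. The device that breaks the coupling is to split {\rm(3)} into its two implications and, in each, use surjectivity to pin the relevant factor to $q$ while choosing the free second-coordinate datum ($s_2=d\rightarrow x_2$, respectively $x_2=s_2\cdot d$) so that the interfering conjunct degenerates to a reflexivity and drops out.
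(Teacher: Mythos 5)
Your proposal is correct and takes essentially the same route as the paper: unwind the twist-product order so that condition (3) for $(Q^2,\leq,\odot,\Rightarrow,(a,b))$ becomes the coupled biconditional and condition (6) becomes (6) together with (9) for $Q$, then close the asymmetry with Lemma~\ref{lem1}(i). Your explicit decoupling argument (pinning $f(z_1,z_2)=q$ by surjectivity and neutralising the second coordinate via $s_2:=d\rightarrow x_2$, resp.\ $x_2:=s_2\cdot d$) is precisely the step the paper's chain of equivalences passes over without comment, so you have supplied strictly more detail at the only delicate point.
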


\begin{proof}
We investigate when $(Q^2,\leq,\odot,\Rightarrow,(a,b))$ satisfies (3) and (6). The following are equivalent:
\begin{align*}
& (Q^2,\leq,\odot,\Rightarrow,(a,b))\text{ satisfies (3)}, \\
& (x,y)\odot(z,v)\leq(t,w)\text{ is equivalent to }(x,y)\leq(z,v)\Rightarrow(t,w), \\
& (x\cdot f(z,v),g(z,v)\rightarrow y)\leq(t,w)\text{ is equivalent to }(x,y)\leq(f(z,v)\rightarrow t,w\cdot g(z,v)), \\
& (x\cdot f(z,v)\leq t\text{ and }w\leq g(z,v)\rightarrow y)\text{ is equivalent to} \\
& \quad\quad(x\leq f(z,v)\rightarrow t\text{ and }w\cdot g(z,v)\leq y), \\
& (Q,\leq,\cdot,\rightarrow,1)\text{ satisfies (3)}.
\end{align*}
Moreover, the following are equivalent:
\begin{align*}
& (Q^2,\leq,\odot,\Rightarrow,(a,b))\text{ satisfies (6)}, \\
& (x,y)\odot(a,b)\approx(x,y), \\
& (x\cdot f(a,b),g(a,b)\rightarrow y)\approx(x,y), \\
& (x\cdot1,1\rightarrow y)\approx(x,y), \\
& x\cdot1\approx x\text{ and }1\rightarrow y\approx y, \\
& (Q,\leq,\cdot,\rightarrow,1)\text{ satisfies (6) and (7)}.
\end{align*}
Now Lemma~\ref{lem1} completes the proof.
\end{proof}

\begin{corollary}
Let $(Q,\leq,\cdot,\rightarrow,1)$ be a poset with binary operations $\cdot$ and $\rightarrow$ and a constant $1$ and consider the full twist product $(Q^2,\leq,\odot,\Rightarrow,(1,1))$ of $(Q,\leq)$ with binary operations $\odot$ and $\Rightarrow$ defined by
\begin{align*}
      (x,y)\odot(z,v) & :=(x\cdot z,v\rightarrow y), \\
(x,y)\Rightarrow(z,v) & :=(x\rightarrow z,v\cdot y)
\end{align*}
for all $(x,y),(z,v)\in Q^2$ and the constant $(1,1)$. Then $(Q,\leq,\cdot,\rightarrow,1)$ is a left-residuated groupoid if and only if $(Q^2,\leq,\odot,\Rightarrow,$ $(1,1))$ has this property.
\end{corollary}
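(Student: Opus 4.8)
The plan is to recognize this Corollary as an immediate specialization of Theorem~\ref{th2}. First I would identify the two surjective maps $f,g\colon Q^2\to Q$ that recover the operations displayed in the Corollary. Comparing the general formula
\[
(x,y)\odot(z,v)=(x\cdot f(z,v),g(z,v)\rightarrow y)
\]
of Theorem~\ref{th2} with the prescribed $(x,y)\odot(z,v)=(x\cdot z,v\rightarrow y)$, one reads off that $f$ must return the first coordinate and $g$ the second. Hence I would take $f$ and $g$ to be the projections $f(z,v):=z$ and $g(z,v):=v$, respectively. The same choice is consistent with the formula for $\Rightarrow$, since under it $(f(x,y)\rightarrow z,v\cdot g(x,y))=(x\rightarrow z,v\cdot y)$, which is exactly the second operation in the Corollary.

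Next I would check that these projections satisfy the hypotheses of Theorem~\ref{th2} with $a=b=1$. Both projections are surjective, since for any $q\in Q$ we have $f(q,w)=q$ and $g(w,q)=q$ for an arbitrary $w\in Q$. Moreover $f(1,1)=1$ and $g(1,1)=1$, so the requirement $f(a,b)=g(a,b)=1$ is met with the constant $(a,b)=(1,1)$.

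With the hypotheses verified and the operations matched, the conclusion follows directly: Theorem~\ref{th2} asserts that $(Q,\leq,\cdot,\rightarrow,1)$ is a left-residuated groupoid if and only if $(Q^2,\leq,\odot,\Rightarrow,(a,b))$ is, and under the present choices this is precisely the stated equivalence for $(Q^2,\leq,\odot,\Rightarrow,(1,1))$. There is essentially no obstacle here; the only point requiring a moment's care is confirming that the projections are surjective and send $(1,1)$ to $1$, which is immediate, so that Theorem~\ref{th2} genuinely applies.
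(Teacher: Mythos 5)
Your proposal is correct and is exactly the paper's own proof: the Corollary is the special case of Theorem~\ref{th2} with $a=b=1$, $f$ the first projection and $g$ the second projection, and your verification that these projections are surjective and satisfy $f(1,1)=g(1,1)=1$ is the only (immediate) checking required.
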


\begin{proof}
This is the special case of Theorem~\ref{th2} where $a=b=1$, $f$ is the first and $g$ the second projection.
\end{proof}

\begin{corollary}
Let $(Q,\leq,\cdot,\rightarrow,1)$ be a poset with   binary operations $\cdot$ and $\rightarrow$ and a constant $1$ and consider the full twist product $(Q^2,\leq,\odot,\Rightarrow,(1,1))$ of $(Q,\leq)$ with binary operations $\odot$ and $\Rightarrow$ defined by
\begin{align*}
      (x,y)\odot(z,v) & :=(x\cdot v,z\rightarrow y), \\
(x,y)\Rightarrow(z,v) & :=(y\rightarrow z,v\cdot x)
\end{align*}
for all $(x,y),(z,v)\in Q^2$ and the constant $(1,1)$. Then $(Q,\leq,\cdot,\rightarrow,1)$ is a left-residuated groupoid if and only if $(Q^2,\leq,\odot,\Rightarrow,$ $(1,1))$ has this property.
\end{corollary}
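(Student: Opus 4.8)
The plan is to recognize this corollary, exactly like the preceding one, as an instance of Theorem~\ref{th2}, with the only work being to identify the correct choice of the surjective maps $f$ and $g$. First I would match the given operations against the templates
\begin{align*}
      (x,y)\odot(z,v) & =(x\cdot f(z,v),g(z,v)\rightarrow y), \\
(x,y)\Rightarrow(z,v) & =(f(x,y)\rightarrow z,v\cdot g(x,y))
\end{align*}
from the theorem. Reading off the first component of $\odot$, the factor $f(z,v)$ must equal $v$, so $f$ should be the second projection; reading off the second component, $g(z,v)$ must equal $z$, so $g$ should be the first projection. This is precisely the opposite assignment to the previous corollary, so the key point to get right is that here $f$ is the second projection and $g$ the first.

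Next I would check that this same choice is consistent with the stated formula for $\Rightarrow$. With $f$ the second projection we get $f(x,y)=y$, giving first component $y\rightarrow z$, and with $g$ the first projection we get $g(x,y)=x$, giving second component $v\cdot x$. Both agree with the definition in the corollary, so the single pair $(f,g)$ simultaneously realizes both operations.

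It then remains to verify that the hypotheses of Theorem~\ref{th2} are met for $a=b=1$. Both projections are surjective from $Q^2$ onto $Q$, and evaluating at $(a,b)=(1,1)$ gives $f(1,1)=1$ and $g(1,1)=1$, so the condition $f(a,b)=g(a,b)=1$ holds and the constant $(a,b)=(1,1)$ is the intended one. With all hypotheses in place, Theorem~\ref{th2} yields the claimed equivalence directly.

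There is essentially no genuine obstacle here, since everything reduces to an application of the theorem already proved; the only place to be careful is the bookkeeping of which projection plays the role of $f$ and which plays the role of $g$, as this differs from the preceding corollary.

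\begin{proof}
This is the special case of Theorem~\ref{th2} where $a=b=1$, $f$ is the second and $g$ the first projection.
\end{proof}
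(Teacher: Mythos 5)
Your proof is correct and is exactly the paper's own argument: the paper also proves this corollary as the special case of Theorem~\ref{th2} with $a=b=1$, $f$ the second projection and $g$ the first projection. Your additional verification that this choice matches both $\odot$ and $\Rightarrow$ and satisfies $f(1,1)=g(1,1)=1$ is just a spelled-out version of the same one-line reduction.
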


\begin{proof}
This is the special case of Theorem~\ref{th2} where $a=b=1$, $f$ is the second and $g$ the first projection.
\end{proof}

\section{Operator residuated posets}

One can easily see that the left-residuated groupoid $(Q^2,\leq,\odot,\Rightarrow,(a,b))$ from Theorem~\ref{th2} need neither be commutative nor associative even if $(Q,\leq,\cdot,\rightarrow,1)$ has this property. Hence, we define the next concept.

\begin{definition}\label{def1}
An {\em operator residuated poset} is an ordered six-tuple $(Q,\leq,\odot,\Rightarrow,0,1)$ such that
\begin{enumerate}[{\rm(i)}]
\item $(Q,\leq,0,1)$ is a bounded poset,
\item $\odot$ and $\Rightarrow$ are mappings from $Q^2$ to $2^Q$ {\rm(}so-called {\em operators}{\rm)},
\item $x\odot y\approx y\odot x$,
\item $\bigcup\limits_{u\in x\odot y}(u\odot z)=\bigcup\limits_{u\in y\odot z}(x\odot u)$ {\rm(}{\em operator associativity}{\rm)},
\item $x\odot y\leq z$ if and only if $x\leq y\Rightarrow z$
\end{enumerate}
for all $x,y,z\in Q$.
\end{definition}

The following result shows that when using an operator residuated structure on the full twist product, commutativity and associativity of the original bounded left-residuated groupoid are preserved.

\begin{theorem}\label{th1}
Let $(Q,\leq,\cdot,\rightarrow,0,1)$ be a bounded commutative resid\-u\-at\-ed monoid and $a_0\in Q$. Then $(Q^2,\leq,\odot,\Rightarrow,(0,1),(1,0))$ where $(Q^2,\leq)$ is the full twist product of $(Q,\leq)$ and the operators $\odot$ and $\Rightarrow$ on $Q^2$ are defined by
\begin{align*}
      (x,y)\odot(z,v) & :=\{(x\cdot z,x\rightarrow v),(x\cdot z,z\rightarrow y)\}, \\
(x,y)\Rightarrow(z,v) & :=\{(x\rightarrow z,x\cdot v),(v\rightarrow y,x\cdot v)\}
\end{align*}
for all $(x,y),(z,v)\in Q^2$ is an operator residuated poset and the mapping $x\mapsto(x,a_0)$ an embedding of $(Q,\leq)$ into $(Q^2,\leq)$.
\end{theorem}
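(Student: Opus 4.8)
The plan is to verify directly the five conditions (i)--(v) of Definition~\ref{def1} together with the embedding assertion, handling commutativity, boundedness and adjointness as short checks and reserving the genuine work for operator associativity. Throughout I would exploit that a commutative residuated monoid is associative and, by Lemma~\ref{lem1}, satisfies left-isotony (2); hence Lemma~\ref{lem3} applies and the import--export law $(x\cdot y)\rightarrow z\approx x\rightarrow(y\rightarrow z)$ of condition (8) is available.

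First I would dispose of the easy parts. For (i), in the full twist product $(0,1)$ is the bottom and $(1,0)$ the top, since $(0,1)\leq(x,y)$ reduces to $0\leq x$ and $y\leq1$, while $(x,y)\leq(1,0)$ reduces to $x\leq1$ and $0\leq y$, all holding in the bounded poset $(Q,\leq,0,1)$. Condition (ii) is immediate, as each operator returns a subset of $Q^2$ with at most two members. For commutativity (iii), I would compute $(z,v)\odot(x,y)=\{(z\cdot x,z\rightarrow y),(z\cdot x,x\rightarrow v)\}$ and rewrite $z\cdot x$ as $x\cdot z$ using commutativity of $\cdot$; the resulting two pairs are exactly those defining $(x,y)\odot(z,v)$, merely listed in the opposite order, so the two sets coincide. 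For the embedding I observe that $(x,a_0)\leq(y,a_0)$ holds iff $x\leq y$ and $a_0\leq a_0$, hence iff $x\leq y$; thus $x\mapsto(x,a_0)$ is an order embedding, in particular injective.

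Next I would establish adjointness (v). Requiring both members of $(x,y)\odot(z,v)=\{(x\cdot z,x\rightarrow v),(x\cdot z,z\rightarrow y)\}$ to lie below $(t,w)$ yields the three scalar conditions $x\cdot z\leq t$, $w\leq x\rightarrow v$ and $w\leq z\rightarrow y$. On the other side, requiring $(x,y)$ to lie below both members of $(z,v)\Rightarrow(t,w)=\{(z\rightarrow t,z\cdot w),(w\rightarrow v,z\cdot w)\}$ yields $x\leq z\rightarrow t$, $x\leq w\rightarrow v$ and $z\cdot w\leq y$. I would then match these pairwise: $x\cdot z\leq t\Leftrightarrow x\leq z\rightarrow t$ is adjointness (3); next $w\leq x\rightarrow v\Leftrightarrow x\cdot w\leq v\Leftrightarrow x\leq w\rightarrow v$ uses (3) together with commutativity; and $w\leq z\rightarrow y\Leftrightarrow z\cdot w\leq y$ likewise. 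Hence the two conjunctions are equivalent and (v) holds.

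The main obstacle is operator associativity (iv), which I would prove by computing both unions explicitly. Feeding the two members of $(x,y)\odot(z,v)$ into $\odot(\,\cdot\,,(t,w))$ gives, after using associativity of $\cdot$, a set all of whose members have first coordinate $x\cdot z\cdot t$ and whose second coordinates range over $\{(x\cdot z)\rightarrow w,\ t\rightarrow(x\rightarrow v),\ t\rightarrow(z\rightarrow y)\}$; the symmetric computation of $\bigcup_{(s,u)\in(z,v)\odot(t,w)}(x,y)\odot(s,u)$ again gives first coordinate $x\cdot z\cdot t$ with second coordinates $\{x\rightarrow(z\rightarrow w),\ (z\cdot t)\rightarrow y,\ x\rightarrow(t\rightarrow v)\}$. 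I expect the crux to be identifying these two three-element sets, which is exactly what (8) delivers: it turns $(x\cdot z)\rightarrow w$ into $x\rightarrow(z\rightarrow w)$; it gives $t\rightarrow(x\rightarrow v)=(t\cdot x)\rightarrow v=(x\cdot t)\rightarrow v=x\rightarrow(t\rightarrow v)$ upon inserting commutativity; and it gives $t\rightarrow(z\rightarrow y)=(t\cdot z)\rightarrow y=(z\cdot t)\rightarrow y$. These are precisely the three second coordinates on the right-hand side, so the two unions agree. This bookkeeping of three-element sets is the only delicate step, and it resolves cleanly once (8) and commutativity are in hand, completing the verification of Definition~\ref{def1}.
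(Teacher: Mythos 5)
Your proposal is correct and follows essentially the same route as the paper: verifying (i)--(v) of Definition~\ref{def1} directly, with the adjointness check reduced to (3) plus commutativity and the operator associativity settled by expanding both unions and identifying the second coordinates via the import--export law (8) from Lemma~\ref{lem3} together with commutativity. The only cosmetic difference is that the paper regroups the expanded union back into the form $((a,b)\odot(c\cdot e,c\rightarrow f))\cup((a,b)\odot(c\cdot e,e\rightarrow d))$ rather than comparing the two three-element sets of second coordinates, which is the same computation.
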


\begin{proof}
Let $a,b,c,d,e,f\in Q$. According to Lemmas~\ref{lem2}, \ref{lem3} and \ref{lem1}, $(Q,\leq,\cdot,\rightarrow,0,1)$ satisfies (1) -- (10).
\begin{enumerate}[(i)]
\item It is evident that $(Q^2,\leq,(0,1),(1,0))$ is a bounded poset.
\item $\odot$ and $\Rightarrow$ are mappings from $(Q^2)^2$ to $2^{(Q^2)}$.
\end{enumerate}
We must prove (iii) -- (v) of Definition~\ref{def1}.
\begin{enumerate}
\item[(iii)] We have
\begin{align*}
(a,b)\odot(c,d) & =\{(a\cdot c,a\rightarrow d),(a\cdot c,c\rightarrow b)\}=\{(c\cdot a,c\rightarrow b),(c\cdot a,a\rightarrow d)\}= \\
                & =(c,d)\odot(a,b).
\end{align*}
\item[(iv)] We have
\begin{align*}
\bigcup_{(x,y)\in(a,b)\odot(c,d)}((x,y)\odot(e,f)) & =\bigcup_{(x,y)\in\{(a\cdot c,a\rightarrow d),(a\cdot c,c\rightarrow b)\}}((x,y)\odot(e,f))= \\
                                                   & =((a\cdot c,a\rightarrow d)\odot(e,f))\cup((a\cdot c,c\rightarrow b)\odot(e,f))= \\
                                                   & =\{((a\cdot c)\cdot e,(a\cdot c)\rightarrow f),((a\cdot c)\cdot e,e\rightarrow(a\rightarrow d)), \\
                                                   & \hspace*{6mm}((a\cdot c)\cdot e,(a\cdot c)\rightarrow f),((a\cdot c)\cdot e,e\rightarrow(c\rightarrow b))\}= \\
                                                   & =\{(a\cdot(c\cdot e),a\rightarrow(c\rightarrow f)),(a\cdot(c\cdot e),(c\cdot e)\rightarrow b), \\
                                                   & \hspace*{6mm}(a\cdot(c\cdot e),a\rightarrow(e\rightarrow d)),(a\cdot(c\cdot e),(c\cdot e)\rightarrow b)\}= \\
                                                   & =((a,b)\odot(c\cdot e,c\rightarrow f))\cup((a,b)\odot(c\cdot e,e\rightarrow d))= \\
                                                   & =\bigcup_{(x,y)\in\{(c\cdot e,c\rightarrow f),(c\cdot e,e\rightarrow d)\}}((a,b)\odot(x,y))= \\
                                                   & =\bigcup_{(x,y)\in(c,d)\odot(e,f)}((a,b)\odot(x,y)).
\end{align*}
\item[(v)] The following are equivalent: 
\begin{align*}
& (a,b)\odot(c,d)\leq(e,f), \\
& \{(a\cdot c,a\rightarrow d),(a\cdot c,c\rightarrow b)\}\leq(e,f), \\
& a\cdot c\leq e,f\leq a\rightarrow d\text{ and }f\leq c\rightarrow b, \\
& a\leq c\rightarrow e,a\leq f\rightarrow d\text{ and }c\cdot f\leq b, \\
& (a,b)\leq\{(c\rightarrow e,c\cdot f),(f\rightarrow d,c\cdot f)\}, \\
& (a,b)\leq(c,d)\Rightarrow(e,f).
\end{align*}
\end{enumerate}
Finally, $(a,a_0)\leq(b,a_0)$ is equivalent to $a\leq b$.
\end{proof}

\begin{example}
If $(Q,\leq,\cdot,\rightarrow,0,1):=(\{0,1\},\leq,\cdot,(x,y)\mapsto1-x+xy,0,1)$ {\rm(}where $+$, $-$ and $\cdot$ denote addition, subtraction and multiplication of the reals, respectively{\rm)} then the tables for $\odot$ and $\Rightarrow$ look as follows:
\[
\begin{array}{c|c|c|c|c}
\odot &      (0,0)      &   (0,1)   &      (1,0)      &      (1,1) \\
\hline
(0,0) &    \{(0,1)\}    & \{(0,1)\} & \{(0,0),(0,1)\} & \{(0,0),(0,1)\} \\
(0,1) &    \{(0,1)\}    & \{(0,1)\} &    \{(0,1)\}    &    \{(0,1)\} \\
(1,0) & \{(0,0),(0,1)\} & \{(0,1)\} &    \{(1,0)\}    & \{(1,0),(1,1)\} \\
(1,1) & \{(0,0),(0,1)\} & \{(0,1)\} & \{(1,0),(1,1)\} &    \{(1,1)\}
\end{array}
\]
\[
\begin{array}{c|c|c|c|c}
\Rightarrow &      (0,0)      &      (0,1)      &   (1,0)   &      (1,1) \\
\hline
   (0,0)    &    \{(1,0)\}    & \{(0,0),(1,0)\} & \{(1,0)\} & \{(0,0),(1,0)\} \\
   (0,1)    &    \{(1,0)\}    &    \{(1,0)\}    & \{(1,0)\} &    \{(1,0)\} \\
   (1,0)    & \{(0,0),(1,0)\} &    \{(0,1)\}    & \{(1,0)\} & \{(0,1),(1,1)\} \\
   (1,1)    & \{(0,0),(1,0)\} & \{(0,1),(1,1)\} & \{(1,0)\} &    \{(1,1)\}
\end{array}
\]
\end{example}

\section{Pseudo-Kleene posets}

It was shown by the authors in \cite{CL} that every poset $\mathbf Q=(Q,\leq)$ can be embedded into a pseudo-Kleene one. For this we use a certain modification of the full twist product construction as follows.

Let $a\in Q$ and define
\begin{align*}
P_a(\mathbf Q) & :=\{(x,y)\in Q^2\mid L(x,y)\leq a\leq U(x,y)\}, \\
(x,y)\leq(z,v) & :\Leftrightarrow(x\leq z\text{ and }v\leq y), \\
        (x,y)' & :=(y,x)
\end{align*}
for all $(x,y),(z,v)\in Q^2$. The following was proved in \cite{CL}:
\begin{itemize}
\item $(P_a(\mathbf Q),\leq,{}')$ is a pseudo-Kleene poset,
\item the mapping $x\mapsto(x,a)$ is an embedding of $\mathbf Q$ into $(P_a(\mathbf Q),\leq)$,
\item $(P_a(\mathbf Q),\leq,{}')$ is a Kleene poset if and only if $\mathbf Q$ is distributive.
\end{itemize}

Since $P_a(\mathbf Q)$ is a subset of the full twist product of $\mathbf Q$, it is a question if residuation from $(Q,\leq,\cdot,\rightarrow,1)$ can be transferred to $P_a(\mathbf Q)$ as shown in Theorem~\ref{th2}. Unfortunately, this is not possible in general since $P_a(\mathbf Q)$ need not be closed under the operators $\odot$ and $\Rightarrow$ defined in Theorem~\ref{th2}. However, we can get necessary and sufficient conditions under which $P_a(\mathbf Q)$ is closed under these operators and hence becomes a pseudo-Kleene operator residuated poset.

If $\mathbf Q=(Q,\leq)$ is a poset, $a,b\in Q$ and every element of $P_a(\mathbf Q)$ is comparable with $(a,a)$ then $(a,b)\in P_a(\mathbf Q)$ and hence $(a,a)\leq(a,b)$ or $(a,b)\leq(a,a)$ whence $b\leq a$ or $a\leq b$, i.e.\ $b$ is comparable with $a$.

\begin{theorem}\label{th3}
Let $(Q,\leq,\cdot,\rightarrow,0,1)$ be a bounded commutative resid\-u\-at\-ed monoid and $a\in Q$ with $a\cdot a=a$, put $\mathbf Q:=(Q,\leq)$ and assume that all elements of $P_a(\mathbf Q)$ are comparable with $(a,a)$. Then
\begin{itemize}
\item $(P_a(\mathbf Q),\leq,\odot,\Rightarrow,(0,1),(1,0))$ where the operators $\odot$ and $\Rightarrow$ are defined by
\begin{align*}
      (x,y)\odot(z,v) & :=\{(x\cdot z,x\rightarrow v),(x\cdot z,z\rightarrow y)\}, \\
(x,y)\Rightarrow(z,v) & :=\{(x\rightarrow z,x\cdot v),(v\rightarrow y,x\cdot v)\}
\end{align*}
for all $(x,y),(z,v)\in P_a(\mathbf Q)$ is an operator residuated poset if and only if the following two conditions hold:
\begin{enumerate}
\item[{\rm(11)}] $a\cdot x<a$ implies $a\cdot x=0$,
\item[{\rm(12)}] $a<x$ implies $x\rightarrow a=a$.
\end{enumerate}
\item $(P_a(\mathbf Q),\leq,{}')$ where $(x,y)':=(y,x)$ for all $(x,y)\in P_a(\mathbf Q)$ is a pseudo-Kleene poset.
\item The mapping $x\mapsto(x,a)$ is an embedding of $\mathbf Q$ into $(P_a(\mathbf Q),\leq)$.
\item $(x,y)'\in(x,y)\Rightarrow(0,1)$ for all $(x,y)\in P_a(\mathbf Q)$.
\end{itemize}
\end{theorem}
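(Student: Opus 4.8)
The plan is to reduce the only substantial item, the first one, to a single closure question, and then dispatch the remaining three items almost immediately. First I would note that, by Theorem~\ref{th1}, the full twist product $(Q^2,\leq,\odot,\Rightarrow,(0,1),(1,0))$ is \emph{already} an operator residuated poset; hence conditions (iii)--(v) of Definition~\ref{def1} hold for all elements of $Q^2$, in particular for all elements of $P_a(\mathbf Q)$, since the adjointness (v) and the equalities (iii), (iv) simply restrict to a subset. Consequently $(P_a(\mathbf Q),\leq,\odot,\Rightarrow,(0,1),(1,0))$ is an operator residuated poset if and only if $\odot$ and $\Rightarrow$ map $P_a(\mathbf Q)\times P_a(\mathbf Q)$ into $2^{P_a(\mathbf Q)}$, i.e.\ if and only if $P_a(\mathbf Q)$ is closed under both operators (the unions in (iv) then automatically remain inside $P_a(\mathbf Q)$). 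Thus the first item becomes: $P_a(\mathbf Q)$ is closed under $\odot$ and $\Rightarrow$ iff (11) and (12) hold.

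Before the case analysis I would collect the arithmetic. Since every element of $P_a(\mathbf Q)$ is comparable with $(a,a)$, every $b\in Q$ is comparable with $a$; together with $a\cdot a=a$, (7) and Lemma~\ref{lem1} this yields $a\cdot w=a$ and $a\to w\geq w\geq a$ for $w\geq a$, and $w\to a=1$ for $w\leq a$, while (11) says exactly $a\cdot w=0$ for $w<a$ and (12) says exactly $w\to a=a$ for $w>a$. For sufficiency I would split a pair of inputs according to whether each lies below or above $(a,a)$, so that its first coordinate is $\leq a$ and its second $\geq a$, or vice versa. In each of the four sign patterns the arithmetic locates every coordinate of every output on one side of $a$, and one checks that each output pair either has one coordinate $\leq a$ and the other $\geq a$, or has a coordinate equal to $a$ (recall that $(a,c)$ and $(c,a)$ lie in $P_a(\mathbf Q)$ for every $c$); the only boundary subcases that could produce a pair with both coordinates strictly on the same side of $a$ are precisely the ones neutralised by (11) and by (12).

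For necessity I would argue contrapositively. If (11) fails, pick $x<a$ with $0<a\cdot x<a$; if (12) fails, pick $x>a$ with $a<x\to a$. In each case I would feed inputs built from $a$, the bounds $0,1$ and $x$ into $\odot$ or $\Rightarrow$ so as to output a pair whose two coordinates both lie strictly below $a$ (first case) or strictly above $a$ (second case); such a pair cannot satisfy $L(\cdot)\leq a\leq U(\cdot)$ and so escapes $P_a(\mathbf Q)$, contradicting closure. I expect this to be the main obstacle: the membership constraint on the inputs tends to force the complementary coordinate back across $a$, so the free coordinate must be chosen carefully, ideally so that the two output coordinates become comparable with each other and thereby exhibit a common bound strictly past $a$; it is exactly here that comparability with $a$ and the idempotency $a\cdot a=a$ get used.

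The remaining three items are then short. The second and third are the statements already proved in \cite{CL}: the poset $(P_a(\mathbf Q),\leq)$, the involution $(x,y)'=(y,x)$ and the map $x\mapsto(x,a)$ are literally the objects treated there, so the pseudo-Kleene property and the embedding transfer verbatim. The fourth is a one-line computation: by (6) and (9) we have $(x,y)\Rightarrow(0,1)=\{(x\to 0,x\cdot 1),(1\to y,x\cdot 1)\}=\{(x\to 0,x),(y,x)\}$, whose second listed element is $(y,x)=(x,y)'$.
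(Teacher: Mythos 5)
Your proposal is correct and follows essentially the same route as the paper: the paper likewise reduces the first item to closure of $P_a(\mathbf Q)$ under $\odot$ and $\Rightarrow$ (its case analysis over the four sign patterns relative to $(a,a)$ produces comparability conditions shown equivalent to (11) and (12), after which it too finishes by invoking Theorem~\ref{th1}), and it disposes of the remaining items exactly as you do, by the results of \cite{CL} and the same one-line computation for $(x,y)'\in(x,y)\Rightarrow(0,1)$. One small caution for your necessity argument: a pair with both coordinates strictly below (or above) $a$ is excluded from $P_a(\mathbf Q)$ not because it must violate $L(\cdot)\leq a\leq U(\cdot)$ directly (in a general poset such a pair can satisfy that condition), but because it is incomparable with $(a,a)$, which the theorem's hypothesis forbids for members of $P_a(\mathbf Q)$.
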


\begin{proof}
Let $b,c,d,e\in Q$. According to Lemmas~\ref{lem2}, \ref{lem3} and \ref{lem1}, $(Q,\leq,\cdot,\rightarrow,0,1)$ satisfies (1) -- (10). If $a\leq x,y$ then $a=a\cdot a\leq a\cdot y\leq x\cdot y$ according to (1) and (2), i.e.\ we have
\begin{enumerate}
\item[(13)] $a\leq x,y$ implies $a\leq x\cdot y$.
\end{enumerate}
\begin{itemize}
\item Assume $(b,c),(d,e)\leq(a,a)$. Then the following hold: \\
$(b\cdot d,b\rightarrow e)\leq(a,a)$ because of (7) and (10). \\
$(b\cdot d,d\rightarrow c)\leq(a,a)$ because of (7) and (10). \\
Since $b\leq a$ we have $b\cdot e\leq a$ according to (7). \\
If $b\cdot e=a$ then $(b\rightarrow d,b\cdot e)$ is comparable with $(a,a)$. \\
If $b\cdot e<a$ then $(b\rightarrow d,b\cdot e)$ is comparable with $(a,a)$ if and only if $a\leq b\rightarrow d$. \\
$(e\rightarrow c,b\cdot e)\geq(a,a)$ because of (7) and (10).
\item Assume $(b,c)\leq(a,a)\leq(d,e)$. Then the following hold: \\
Since $b\leq a$ we have $b\cdot d\leq a$ according to (7). \\
If $b\cdot d=a$ then $(b\cdot d,b\rightarrow e)$ is comparable with $(a,a)$. \\
If $b\cdot d<a$ then $(b\cdot d,b\rightarrow e)$ is comparable with $(a,a)$ if and only if $a\leq b\rightarrow e$. \\
$(b\cdot d,d\rightarrow c)\leq(a,a)$ because of (7) and (10). \\
$(b\rightarrow d,b\cdot e)\geq(a,a)$ because of (7) and (10). \\
$(e\rightarrow c,b\cdot e)\geq(a,a)$ because of (7) and (10).
\item Assume $(d,e)\leq(a,a)\leq(b,c)$. Then the following hold: \\
$(b\cdot d,b\rightarrow e)\leq(a,a)$ because of (7) and (10). \\
Since $d\leq a$ we have $b\cdot d\leq a$ according to (7). \\
If $b\cdot d=a$ then $(b\cdot d,d\rightarrow c)$ is comparable with $(a,a)$. \\
If $b\cdot d<a$ then $(b\cdot d,d\rightarrow c)$ is comparable with $(a,a)$ if and only if $a\leq d\rightarrow c$. \\
Since $a\leq b,e$ we have $a\leq b\cdot e$ according to (13). \\
If $b\cdot e=a$ then $(b\rightarrow d,b\cdot e)$ is comparable with $(a,a)$. \\
If $a<b\cdot e$ then $(b\rightarrow d,b\cdot e)$ is comparable with $(a,a)$ if and only if $b\rightarrow d\leq a$. \\
If $b\cdot e=a$ then $(e\rightarrow c,b\cdot e)$ is comparable with $(a,a)$. \\
If $a<b\cdot e$ then $(e\rightarrow c,b\cdot e)$ is comparable with $(a,a)$ if and only if $e\rightarrow c\leq a$.
\item Assume $(a,a)\leq(b,c),(d,e)$. Then the following hold: \\
Since $a\leq b,d$ we have $a\leq b\cdot d$ according to (13). \\
If $b\cdot d=a$ then $(b\cdot d,b\rightarrow e)$ is comparable with $(a,a)$. \\
If $a<b\cdot d$ then $(b\cdot d,b\rightarrow e)$ is comparable with $(a,a)$ if and only if $b\rightarrow e\leq a$. \\
If $b\cdot d=a$ then $(b\cdot d,d\rightarrow c)$ is comparable with $(a,a)$. \\
If $a<b\cdot d$ then $(b\cdot d,d\rightarrow c)$ is comparable with $(a,a)$ if and only if $d\rightarrow c\leq a$. \\
$(b\rightarrow d,b\cdot e)\geq(a,a)$ because of (7) and (10), \\
Since $e\leq a$ we have $b\cdot e\leq a$ according to (7). \\
If $b\cdot e=a$ then $(e\rightarrow c,b\cdot e)$ is comparable with $(a,a)$. \\
If $b\cdot e<a$ then $(e\rightarrow c,b\cdot e)$ is comparable with $(a,a)$ if and only if $a\leq e\rightarrow c$.
\end{itemize}
Hence $(x,y)\odot(z,v)\subseteq P_a(\mathbf Q)$ and $(x,y)\Rightarrow(z,v)\subseteq P_a(\mathbf Q)$ for all $(x,y),(z,v)\in P_a(\mathbf Q)$ if and only if the following statements hold:
\begin{enumerate}[(a)]
\item $b,d\leq a\leq c,e$ and $b\cdot e<a$ imply $a\leq b\rightarrow d$.
\item $b,e\leq a\leq c,d$ and $b\cdot d<a$ imply $a\leq b\rightarrow e$.
\item $c,d\leq a\leq b,e$ and $b\cdot d<a$ imply $a\leq d\rightarrow c$.
\item $c,d\leq a\leq b,e$ and $a<b\cdot e$ imply $b\rightarrow d\leq a$.
\item $c,d\leq a\leq b,e$ and $a<b\cdot e$ imply $e\rightarrow c\leq a$.
\item $c,e\leq a\leq b,d$ and $a<b\cdot d$ imply $b\rightarrow e\leq a$.
\item $c,e\leq a\leq b,d$ and $a<b\cdot d$ imply $d\rightarrow c\leq a$.
\item $c,e\leq a\leq b,d$ and $b\cdot e<a$ imply $a\leq e\rightarrow c$.
\end{enumerate}
Now (a) is equivalent to the following statements:
\begin{align*}
& b\cdot a<a\text{ implies }a\leq b\rightarrow0, \\
& a\cdot b<a\text{ implies }a\cdot b\leq0, \\
& a\cdot b<a\text{ implies }a\cdot b=0, \\
& (11).
\end{align*}
In the same way one can see that (b), (c) and (h) are equivalent to (11). Moreover, (d) is equivalent to the following statements:
\begin{align*}
& a\leq b,e\text{ and }a<b\cdot e\text{ imply }b\rightarrow a\leq a, \\
& a<b\text{ implies }b\rightarrow a\leq a, \\
& a<b\text{ implies }b\rightarrow a=a, \\
& (12).
\end{align*}
In the same way one can see that (e), (f) and (g) are equivalent to (12). Moreover, we have
\[
(b,c)'=(c,b)\in\{(b\rightarrow0,b),(c,b)\}=\{(b\rightarrow0,b\cdot1),(1\rightarrow c,b\cdot1)\}=(b,c)\Rightarrow(0,1).
\]
The rest follows from Theorem~\ref{th1}.
\end{proof}

\begin{example}
Consider the bounded commutative residuated semigroup $(Q,\leq,\cdot,\rightarrow,0,$ $1)$ with $Q=\{0,a,1\}$, $0<a<1$ and
\[
\begin{array}{c|ccc}
\cdot & 0 & a & 1 \\
\hline
  0   & 0 & 0 & 0 \\
  a   & 0 & a & a \\
  1   & 0 & a & 1
\end{array}
\quad\quad\quad
\begin{array}{c|ccc}
\rightarrow & 0 & a & 1 \\
\hline
     0      & 1 & 1 & 1 \\
     a      & 0 & 1 & 1 \\
     1      & 0 & a & 1
\end{array}
\]
and put $\mathbf Q:=(Q,\leq)$. It is easy to check that $\mathbf Q$ is a distributive lattice and $(Q,\leq,\cdot,\rightarrow,0,1)$ is a bounded commutative resid\-u\-at\-ed monoid satisfying all the assumptions of Theorem~\ref{th3}. The poset $(P_a(\mathbf Q),\leq)$ is depicted in Figure~2:

\vspace*{-2mm}

\begin{center}
\setlength{\unitlength}{7mm}
\begin{picture}(4,10)
\put(2,1){\circle*{.3}}
\put(1,3){\circle*{.3}}
\put(3,3){\circle*{.3}}
\put(2,5){\circle*{.3}}
\put(1,7){\circle*{.3}}
\put(3,7){\circle*{.3}}
\put(2,9){\circle*{.3}}
\put(2,1){\line(-1,2)1}
\put(2,1){\line(1,2)1}
\put(1,3){\line(1,2)2}
\put(3,3){\line(-1,2)2}
\put(2,9){\line(-1,-2)1}
\put(2,9){\line(1,-2)1}
\put(1.35,.3){$(0,1)$}
\put(-.6,2.85){$(0,a)$}
\put(3.3,2.85){$(a,1)$}
\put(2.3,4.85){$(a,a)$}
\put(-.6,6.85){$(1,a)$}
\put(3.3,6.85){$(a,0)$}
\put(1.35,9.35){$(1,0)$}
\put(1.2,-.75){{\rm Fig.\ 2}}
\end{picture}
\end{center}

\vspace*{4mm}

Then the operators $\odot$ and $\Rightarrow$ have the following tables:
\[
\begin{array}{c|c|c|c|c|c|c|c}
\odot & 0a & 01 & a0 & aa & a1 & 10 & 1a \\
\hline
  0a  &  01   & 01 &  01   &  01   &  01   & 0a,01 & 0a,01 \\
  01  &  01   & 01 &  01   &  01   &  01   &  01   &  01 \\
  a0  &  01   & 01 &  a0   & a0,a1 & a0,a1 &  a0   & a0,a1 \\
  aa  &  01   & 01 & a0,a1 &  a1   &  a1   & a0,aa & aa,a1 \\
  a1  &  01   & 01 & a0,a1 &  a1   &  a1   & a0,a1 &  a1 \\
  10  & 0a,01 & 01 &  a0   & a0,aa & a0,a1 &  10   & 10,1a \\
  1a  & 0a,01 & 01 & a0,a1 & aa,a1 &  a1   & 10,1a &  1a
\end{array}
\]
\[
\begin{array}{c|c|c|c|c|c|c|c}
\Rightarrow &  0a   &  01   &  a0   &  aa   &  a1   & 10 &  1a \\
\hline
     0a     &  10   & a0,10 &  10   &  10   & a0,10 & 10 &  10 \\
     01     &  10   &  10   &  10   &  10   &  10   & 10 &  10 \\
     a0     &  0a   &  0a   &  10   & 0a,1a & 0a,1a & 10 & 0a,1a \\
     aa     & 0a,1a & 0a,aa &  10   &  1a   & aa,1a & 10 &  1a \\
     a1     & 0a,1a & 0a,1a &  10   &  1a   &  1a   & 10 &  1a \\
     10     &  0a   &  01   & a0,10 & 0a,aa & 01,a1 & 10 & 0a,1a \\
     1a     & 0a,1a & 01,a1 & a0,10 & aa,1a &  a1   & 10 &  1a
\end{array}
\]
Hence $(P_a(\mathbf Q),\leq,\odot,\Rightarrow,(0,1),(1,0))$ is an operator residuated poset $(P_a(\mathbf Q),\leq,{}')$ is a Kleene lattice.
\end{example}

\begin{example}
On the other hand, the bounded residuated monoid $(Q,\leq,\cdot,\rightarrow,1)$ from Example~\ref{ex1} has only two idempotents, namely $0$ and $1$. Every element of $P_0(\mathbf Q)=(\{0\}\times Q)\cup(Q\times\{0\})$ is comparable with $(0,0)$. But if $x\neq1$ then $0<x$, but $x\rightarrow0\neq0$ contradicting {\rm(12)}. Similarly, every element of $P_1(\mathbf Q)=(\{1\}\times Q)\cup(Q\times\{1\})$ is comparable with $(1,1)$. But if $x\neq0,1$ then $0<1\cdot x<1$ contradicting (11).
\end{example}

Authors' addresses:

Ivan Chajda \\
Palack\'y University Olomouc \\
Faculty of Science \\
Department of Algebra and Geometry \\
17.\ listopadu 12 \\
771 46 Olomouc \\
Czech Republic \\
ivan.chajda@upol.cz

Helmut L\"anger \\
TU Wien \\
Faculty of Mathematics and Geoinformation \\
Institute of Discrete Mathematics and Geometry \\
Wiedner Hauptstra\ss e 8-10 \\
1040 Vienna \\
Austria, and \\
Palack\'y University Olomouc \\
Faculty of Science \\
Department of Algebra and Geometry \\
17.\ listopadu 12 \\
771 46 Olomouc \\
Czech Republic \\
helmut.laenger@tuwien.ac.at

\end{document}